\DeclareFontFamily{U}{shuffle}{}
\DeclareFontShape{U}{shuffle}{m}{n}{ <-8>shuffle7 <8->shuffle10}{}
\newcommand{\A}{{\rm A}}
\newcommand\ta{{\texttt{a}}}
\newcommand\tb{{\texttt{b}}}
\newcommand{\bfj}{{\boldsymbol{\sl{j}}}}
\newcommand{\bfk}{{\boldsymbol{\sl{k}}}}
\newcommand{\bfl}{{\boldsymbol{\sl{l}}}}
\def\int{\displaystyle\!int}
\def\lim{\displaystyle\!lim}
\def\sum{\displaystyle\!sum}
\def\sup{\displaystyle\!sup}
\def\inf{\displaystyle\!inf}
\def\cap{\displaystyle\!cap}
\def\max{\displaystyle\!max}
\def\min{\displaystyle\!min}
\let\oldsection\section
\renewcommand\section{\setcounter{equation}{0}\oldsection}
\DeclareMathOperator*{\dep}{dep}
\DeclareMathOperator{\Li}{Li}
\def\N{\mathbb{N}}
\def\CC{\mathbb{C}}
\def\ze{\zeta}
\theoremstyle{plain}
\newtheorem{thm}{Theorem}[section]
\newtheorem{cor}[thm]{Corollary}
\theoremstyle{definition}
\begin{document}
\title{\bf New Proofs of the Explicit Formulas of Arakawa--Kaneko Zeta Values and Kaneko--Tsumura $\eta$- and $\psi$- Values}
\author{
{Masanobu Kaneko${}^{a,}$\thanks{Email: mkaneko@math.kyushu-u.ac.jp (M. Kaneko), ORCID 0000-0002-6658-7313},\ Weiping Wang${}^{b,}$\thanks{Email: wpingwang@zstu.edu.cn (W. Wang), ORCID 0009-0001-5162-0598}, \ Ce Xu${}^{c,}$\thanks{Email: cexu2020@ahnu.edu.cn (C. Xu),\ {\bf corresponding author}, ORCID 0000-0002-0059-7420}\ and\ Jianqiang Zhao${}^{d,}$\thanks{Email: zhaoj@ihes.fr (J. Zhao), ORCID 0000-0003-1407-4230}}\\[1mm]
\small a. Faculty of Mathematics, Kyushu University
744 Motooka, Nishi-ku, Fukuoka, 819-0395, Japan\\
\small b. School of Science, Zhejiang Sci-Tech University, Hangzhou 310018, P.R. China\\
\small c. School of Mathematics and Statistics, Anhui Normal University, Wuhu 241002, P.R. China\\
\small d. Department of Mathematics, The Bishop's School, La Jolla, CA 92037, United States of America
}

\date{}
\maketitle

\noindent{\bf Abstract.} In this paper, we establish some new identities of integrals involving multiple polylogarithm functions and their level two analogues in terms of Hurwitz-type multiple zeta (star) values. Using these identities, we provide new proofs of the explicit formulas of Arakawa--Kaneko zeta values, Kaneko--Tsumura $\eta$- and $\psi$-values, and also give a formula for double $T$-values.

\medskip
\noindent{\bf Keywords}: Arakawa-Kaneko zeta function; Kaneko-Tsumura $\eta$-function; Kaneko-Tsumura $\psi$-function; Multiple zeta (star) values; Multiple $T$-values; Iterated integrals.

\medskip
\noindent{\bf AMS Subject Classifications (2020):} 11M32, 11M99.

\section{Introduction}

We begin with some basic notations. Let $\N$ be the set of positive integers and $\N_0:=\N\cup\{0\}$.
A finite sequence $\bfk:=(k_1,\ldots, k_r)\in\N^r$ is called a \emph{composition}. We put $|\bfk|:=k_1+\cdots+k_r$ and $\dep(\bfk):=r$, and call them the \emph{weight} and the \emph{depth} of $\bfk$, respectively. If $k_1>1$, $\bfk$ is called \emph{admissible}.
For a composition $\bfk=(k_1,k_2,\ldots,k_r)$ and $\bfj:=(j_1,j_2,\ldots,j_r)\in \N_0^r$, let $\overleftarrow{\bfk}:=(k_r,\ldots,k_2,k_1)$ be the \emph{reversal} of $\bfk$ and
\[
B(\bfk;\bfj):=\prod_{i=1}^r\binom{k_i+j_i-1}{j_i}.
\]
Moreover, for such $\bfk$ and $\bfj$ with same depth, we denote by $\bfk+\bfj$ the composition obtained by adding the corresponding components.

The \emph{Hoffman dual} of a composition $\bfk=(k_1,\ldots,k_r)$ is the composition $\bfk^\vee=(k'_1,\ldots,k'_{r'})$ uniquely
determined by the conditions
$|\bfk|=k_1+\cdots+k_r=k'_1+\cdots+k'_{r'}$ and
\begin{equation*}
\{1,2,\ldots,|\bfk|-1\}
=\left\{\sum_{i=1}^{j} k_i\right\}_{j=1}^{r-1}
 \coprod \left\{\sum_{i=1}^{j} k_i'\right\}_{j=1}^{r'-1},
\end{equation*}
where $\coprod$ denotes the union of two disjoint sets. Equivalently, $\bfk^\vee$ can be obtained from $\bfk$ by swapping the commas ``,'' and the plus signs ``+'' in the expression
\begin{equation}\label{eq:HdualDef}
 \bfk=(\underbrace{1+\cdots+1}_{\text{$k_1$ times}},\dotsc,\underbrace{1+\cdots+1}_{\text{$k_r$ times}}),
\end{equation}
so that
\begin{align}\label{eq:HdualDef2}
{\bfk}^\vee=(\underbrace{1,\ldots,1}_{k_1}+\underbrace{1,\ldots,1}_{k_2}+1,\ldots,1+\underbrace{1,\ldots,1}_{k_r}).
\end{align}
For example, we have
$({1,1,2,1})^\vee=(3,2)$ and $({2,1,4})^\vee=(1,3,1,1,1)$.

For an admissible composition written as $\bfk_+:=(k_1+1,k_2,\ldots,k_r)$ with $\bfk=(k_1,k_2,\ldots,k_r)$,
the usual dual composition of $\bfk_+$ in the theory of multiple zeta values (see for instance \cite[Ch.~5]{Zhao2016} for the precise definition)
can be described in terms of Hoffman's dual $\vee$ as $(\overleftarrow{\bfk}^\vee)_+$.

Now, for an admissible composition $\bfk=(k_1,\ldots,k_r)$, the \emph{multiple zeta values} (MZVs) and \emph{multiple zeta star values} are defined by
\begin{align*}
&\zeta(\bfk):=\sum_{n_1>\cdots>n_r>0}
    \frac{1}{n_1^{k_1}\cdots n_r^{k_r}}\quad\text{and}\quad
\zeta^\star(\bfk):=\sum_{n_1\geq\cdots\geq n_r>0}
    \frac{1}{n_1^{k_1}\cdots n_r^{k_r}},
\end{align*}
respectively. The systematic study of MZVs began with the works of Hoffman \cite{H1992} and Zagier \cite{DZ1994} in the early 1990s. Due to their surprising and sometimes mysterious appearance in the study of many branches of mathematics and theoretical physics, these special values have attracted a lot of attention and interest in the past three decades. For more details, the readers are referred to the book of the fourth-named author \cite{Zhao2016}.

As a level two generalization of MZVs, the first-named author and Tsumura \cite{KanekoTs2018b,KanekoTs2019} introduced and studied \emph{multiple $T$-values}, defined by
\begin{align*}
T(\bfk):
&=\sum_{\substack{n_1>n_2>\cdots>n_r>0 \\ n_j\equiv r+1-j\pmod{2}}}
    \frac{2^r}{n_1^{k_1}n_2^{k_2}\dotsm n_r^{k_r}}\nonumber\\
&=\sum_{m_1>m_2>\cdots>m_r>0}{\frac{2^r}{{(2m_1-r)^{{k_1}}(2m_2-r+1)^{{k_2}} \dotsm (2m_r-1)^{{k_r}}}}}.
\end{align*}

On the other hand, for any composition $\bfk=(k_1,k_2\ldots,k_r)$ and $s\in\CC$ with $\Re(s)>0$, the \emph{Arakawa-Kaneko zeta function} \cite{AM1999} is defined by
\begin{align}\label{a1}
\xi(s;\bfk):=\frac{1}{\Gamma(s)}\int_{0}^\infty
    \frac{t^{s-1}}{e^t-1}{\rm Li}_{\bfk}(1-e^{-t})dt,
\end{align}
where
\begin{align}\label{a2}
&{\mathrm{Li}}_{\bfk}(z)
:=\sum_{n_1>\cdots>n_r>0}{\frac{{{z^{{n_1}}}}}{{n_1^{k_1}\cdots n_r^{{k_r}}}}}\,,\quad z \in [-1,1)
\end{align}
is the \emph{multiple polylogarithm function}.
In \cite{KT2018}, the first-named author and Tsumura introduced and studied a function of similar type
\begin{align}\label{Def-eta}
\eta(s;\bfk):=\frac{1}{\Gamma(s)} \int_{0}^\infty \frac{t^{s-1}}{1-e^t}{\rm Li}_{\bfk}(1-e^{t})dt,
\end{align}
which we call the \emph{Kaneko-Tsumura $\eta$-function}.

They also defined in \cite{KanekoTs2018b} a level two analogue of $\xi(s;\bfk)$, which is referred to as the
\emph{Kaneko-Tsumura $\psi$-function}, as
\begin{equation}\label{defn-formula-KTP}
\psi(s;\bfk)=\frac{1}{\Gamma(s)}\int_0^\infty t^{s-1}
    \frac{{\rm A}(\bfk;\tanh t/2)}{\sinh(t)} dt,
\end{equation}
for any composition $\bfk=(k_1,k_2\ldots,k_r)$ and $s\in\CC$ with $\Re{(s)}>0$. Here,
\begin{align}\label{defn-formula-KTA}
{\rm A}(\bfk;x):
&= 2^r\sum_{\substack{n_1>n_2>\cdots> n_r>0\\ n_j\equiv r+1-j\pmod{2}}} {\frac{{{x^{{n_1}}}}}{{n_1^{{k_1}}n_2^{{k_2}} \cdots n_r^{{k_r}}}}}\nonumber\\
&=2^r \sum\limits_{m_1>m_2>\cdots>m_r>0}  {\frac{{{x^{{2m_1-r}}}}}{{(2m_1-r)^{{k_1}}(2m_2-r+1)^{{k_2}} \dotsm (2m_r-1)^{{k_r}}}}}
\end{align}
for $|x|\leq 1$, with $(k_1,x)\neq (1,1)$ (note that this is $2^r$ times ${\rm Ath}(\bfk;z)$ introduced in \cite{KanekoTs2018b}).

Special values of these $\xi$-, $\eta$-, and $\psi$-functions have been studied in many works such as \cite{AM1999,ChenKW19,KT2018,KanekoTs2018b,LuoSi2023,PX2019,XuZhao2020a,XuZhao2020b,Y2016} (see also references therein).
In particular, the first-named author and Tsumura \cite{KT2018} (for $\xi$ and $\eta$) and the third- and the fourth-named author \cite{XuZhao2020a} (for $\psi$) proved the explicit formulas
\begin{align}
&\xi(k+1;\bfk)=\sum_{|\bfj|=k,\dep(\bfj)=n} B\left((\overleftarrow{\bfk}^\vee)_+;\bfj\right)
    \ze\left((\overleftarrow{\bfk}^\vee)_++\bfj\right),\\
&\psi(k+1;\bfk)=\sum_{|\bfj|=k,\dep(\bfj)=n} B\left((\overleftarrow{\bfk}^\vee)_+;\bfj\right)T\left((\overleftarrow{\bfk}^\vee)_++\bfj\right),\label{eq-ef-KTPsiF}\\
&\eta(k+1;\bfk)=(-1)^{r-1} \sum_{|\bfj|=k,\dep(\bfj)=n} B\left((\overleftarrow{\bfk}^\vee)_+;\bfj\right)\ze^\star\left((\overleftarrow{\bfk}^\vee)_++\bfj\right),\label{eq-ef-KTeta}
\end{align}
where $\bfk:=(k_1,k_2,\ldots,k_r)$, $k\in \N_0$ and $n:=|\bfk|+1-\dep(\bfk)$.

In this short note, we provide new proofs of these formulas (Corollaries \ref{AKZ-KTPSI-COR} and \ref{KTeta-COR}) as a result of the study of certain integrals involving multiple polylogarithm
functions and their level two analogue, the A-functions~\eqref{defn-formula-KTA}. We also prove a formula for double $\zeta$- and $T$-values related to a conjecture posed in \cite[Conjecture 5.3]{KanekoTs2019}.

\section{New Proofs of Arakawa--Kaneko zeta-type values}\label{sec-mt}

In this section, we will combine the iterated integrals with the integrals involving multiple polylogarithm functions and A-functions to prove some interesting results.
The theory of iterated integrals was developed first by K.T. Chen in the 1960's \cite{KTChen1971,KTChen1977}. It has played important roles in the study of algebraic topology and algebraic geometry in the past half century. Its simplest form is
$$\int_{a}^b f_p(t)dtf_{p-1}(t)dt\cdots f_1(t)dt:=\int_{a<t_p<\cdots<t_1<b}f_p(t_p)f_{p-1}(t_{p-1})\cdots f_1(t_1)dt_1dt_2\cdots dt_p.$$

Similarly to the multiple zeta values and multiple zeta star values, for a composition $\bfk=(k_1,\ldots,k_r)$, a positive integer $n$, and $\alpha\in \mathbb{C}\backslash\N_0^-$ with $\N_0^-:=\{0,-1,-2,-3,\ldots\}$, we define the \emph{Hurwitz-type multiple zeta values}, \emph{Hurwitz-type multiple zeta star values} and \emph{Hurwitz-type multiple $T$-values} by
\begin{align}
\zeta(\bfk;\alpha)&:=\sum\limits_{n_1>\cdots>n_r>0 } \frac{1}{(n_1+\alpha-1)^{k_1}\cdots (n_r+\alpha-1)^{k_r}}\,,\label{HTMZVs}\\
\zeta^\star(\bfk;\alpha)&:=\sum\limits_{n_1\geq\cdots\geq n_r>0} \frac{1}{(n_1+\alpha-1)^{k_1}\cdots (n_r+\alpha-1)^{k_r}}\,,\label{THMZSVs}
\end{align}
and
\begin{equation}\label{Defn-HTMTVs}
T(\bfk;\alpha):=\sum\limits_{m_1>\cdots>m_r>0}  {\frac{2^r}{{(2m_1-r-1+\alpha)^{{k_1}}(2m_2-r+\alpha)^{{k_2}} \dotsm (2m_r-2+\alpha)^{{k_r}}}}},
\end{equation}
respectively. Clearly, $\zeta(\bfk;1)=\zeta(\bfk)$, $\zeta^\star(\bfk;1)=\zeta^\star(\bfk)$ and $T(\bfk;1)=T(\bfk)$. In particular, according to the definitions, for any admissible composition $\bfk:=(k_1,k_2,\ldots,k_r)$ and $\alpha \in \mathbb{C}\setminus \N^-$ with $\N^-:=\{-1,-2,-3,\ldots\}$, we have
\begin{align}
\Li_{\bfk}(x)
&=\int_0^x
    \left(\frac{dt}{1-t}\right)\left(\frac{dt}{t}\right)^{k_r-1}\cdots
    \left(\frac{dt}{1-t}\right)\left(\frac{dt}{t}\right)^{k_1-1},\label{Eq-MPL-ItIn}\\
\A(\bfk;x)
&=\int_0^x
    \left(\frac{2dt}{1-t^2}\right)\left(\frac{dt}{t}\right)^{k_r-1}\cdots
    \left(\frac{2dt}{1-t^2}\right)\left(\frac{dt}{t}\right)^{k_1-1},\label{Eq-KTA-ItIn}\\
\ze(\bfk;1+\alpha)
&=\int_0^1
    \frac{t^\alpha dt}{1-t}\left(\frac{dt}{t}\right)^{k_r-1}
        \left(\frac{dt}{1-t}\right)\left(\frac{dt}{t}\right)^{k_{r-1}-1}\cdots
        \left(\frac{dt}{1-t}\right)\left(\frac{dt}{t}\right)^{k_1-1},
        \label{HTMZVs-Iterated-Integeral}\\
T(\bfk;1+\alpha)
&=\int_0^1 \frac{2t^\alpha dt}{1-t^2}\left(\frac{dt}{t}\right)^{k_r-1}
    \left(\frac{2dt}{1-t^2}\right)\left(\frac{dt}{t}\right)^{k_{r-1}-1}\cdots
        \left(\frac{2dt}{1-t^2}\right)\left(\frac{dt}{t}\right)^{k_1-1}.
        \label{HTMTVs-Iterated-Integeral}
\end{align}
Then the following theorem can be established.

\begin{thm}\label{thm-MPL-MAL-Para}
For any composition $\bfk:=(k_1,k_2,\ldots,k_r)$, $k\in \N$, and any $\alpha\in \mathbb{C}\backslash \N$, the following formulas hold:
\begin{align}
&\int_0^1\frac{\Li_{\bfk}(x)\log^k(1-x)}{x(1-x)^\alpha}dx
=(-1)^kk!\sum_{\substack{|\bfj|=k\\\dep(\bfj)=n}}
    B\left((\overleftarrow{\bfk}^\vee)_+;\bfj\right)
    \ze\left((\overleftarrow{\bfk}^\vee)_++\bfj;1-\alpha\right),\label{EQ-MPL-LOG-A}\\
&\int_0^1\frac{\A(\bfk;x)\log^k(\frac{1-x}{1+x})}{x(\frac{1-x}{1+x})^\alpha}dx
=(-1)^kk!\sum_{\substack{|\bfj|=k\\\dep(\bfj)=n}}
    B\left((\overleftarrow{\bfk}^\vee)_+;\bfj\right)
    T\left((\overleftarrow{\bfk}^\vee)_++\bfj;1-\alpha\right),\label{EQ-MAL-LOG-A}
\end{align}
where $n:=|\bfk|+1-\dep(\bfk)$.
\end{thm}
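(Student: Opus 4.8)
The plan is to express both left-hand sides as single iterated integrals on $[0,1]$ and to transport them to the right-hand sides by the path reversal that already governs the relation between $\bfk_+$ and $(\ola{\bfk}^\vee)_+$ recalled in the Introduction, carrying the extra logarithmic factor along through the shuffle product. For \eqref{EQ-MPL-LOG-A} I would first write $\log^k(1-x)=(-1)^k k!\int_0^x(\tfrac{dt}{1-t})^k$ and combine it with \eqref{Eq-MPL-ItIn} via the shuffle product of iterated integrals on $[0,x]$:
\[
\Li_{\bfk}(x)\,\log^k(1-x)=(-1)^k k!\int_0^x w_{\bfk}\sha\Big(\tfrac{dt}{1-t}\Big)^{k},
\]
where $w_{\bfk}=(\tfrac{dt}{1-t})(\tfrac{dt}{t})^{k_r-1}\cdots(\tfrac{dt}{1-t})(\tfrac{dt}{t})^{k_1-1}$ is the word of $\Li_{\bfk}$. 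No regularization is needed, since for each fixed $x\in(0,1)$ both factors are genuinely convergent integrals on $[0,x]$. Multiplying by the outer one-form $\tfrac{dx}{x}(1-x)^{-\alpha}$ and integrating $x$ over $(0,1)$ then presents the whole left-hand side as one iterated integral on $[0,1]$ whose outermost form is this $\tfrac{dx}{x}$-type form carrying the Hurwitz twist $(1-x)^{-\alpha}$.

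Next I would apply the involution $x\mapsto 1-x$ to this integral. It reverses the simplex, interchanges $\tfrac{dt}{t}\leftrightarrow\tfrac{dt}{1-t}$, and sends the outer form to the innermost Hurwitz form $\tfrac{s^{-\alpha}\,ds}{1-s}$ of \eqref{HTMZVs-Iterated-Integeral}; the prototype is $\int_{0<t<x<1}\tfrac{dt}{1-t}\,\tfrac{dx}{x(1-x)^\alpha}=\ze((2);1-\alpha)$. Because reversal-and-swap is an automorphism of the shuffle product, the image of $w_{\bfk}\sha(\tfrac{dt}{1-t})^k$ is $\overline{w}_{\bfk}\sha(\tfrac{dt}{t})^k$, where $\overline{w}_{\bfk}$ is the reverse-and-swap of $w_{\bfk}$. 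The word consisting of the innermost Hurwitz form followed by $\overline{w}_{\bfk}$ is exactly the word of $\ze((\ola{\bfk}^\vee)_+;1-\alpha)$: indeed, appending the bare $\tfrac{dt}{t}$ to $w_{\bfk}$ gives the word of $\Li_{\bfk_+}$, whose value at $1$ is $\zeta(\bfk_+)$, and the present reversal is nothing but the duality $\bfk_+\leftrightarrow(\ola{\bfk}^\vee)_+$ of the Introduction, now Hurwitz-deformed by $1-\alpha$.

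It remains to read off the effect of shuffling in the $k$ extra copies of $\tfrac{dt}{t}$. Write $\bfm_0=(\ola{\bfk}^\vee)_+=(m_1,\dots,m_n)$, so the word of $\ze(\bfm_0;1-\alpha)$ decomposes into $n$ blocks, the $i$-th block being a leading $\tfrac{\cdot}{1-t}$-form followed by $m_i-1$ copies of $\tfrac{dt}{t}$. Each inserted $\tfrac{dt}{t}$ attaches to the block led by the nearest preceding $\tfrac{\cdot}{1-t}$-form, hence raises exactly one index; distributing the $k$ insertions as $\bfj=(j_1,\dots,j_n)$ with $|\bfj|=k$ produces the composition $\bfm_0+\bfj$, while interleaving $j_i$ new letters among the $m_i-1$ old ones in the $i$-th block can be done in $\binom{m_i+j_i-1}{j_i}$ ways. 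Multiplying over blocks gives the coefficient $B(\bfm_0;\bfj)=\prod_i\binom{m_i+j_i-1}{j_i}$, so the iterated integral equals $\sum_{|\bfj|=k,\dep(\bfj)=n}B(\bfm_0;\bfj)\,\ze(\bfm_0+\bfj;1-\alpha)$; together with the prefactor $(-1)^k k!$ this is \eqref{EQ-MPL-LOG-A}. The level-two identity \eqref{EQ-MAL-LOG-A} follows by the identical argument, using $\log^k(\tfrac{1-x}{1+x})=(-1)^k k!\int_0^x(\tfrac{2\,dt}{1-t^2})^k$, \eqref{Eq-KTA-ItIn}, the outer form $\tfrac{dx}{x}(\tfrac{1-x}{1+x})^{-\alpha}$, and the involution $x\mapsto\tfrac{1-x}{1+x}$ in place of $x\mapsto 1-x$; a short computation shows this involution interchanges $\tfrac{dt}{t}\leftrightarrow\tfrac{2\,dt}{1-t^2}$ and carries the outer form to the innermost Hurwitz $T$-form $\tfrac{2s^{-\alpha}\,ds}{1-s^2}$ of \eqref{HTMTVs-Iterated-Integeral}, and the word combinatorics are unchanged because they depend only on the abstract words.

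The main obstacle I expect is analytic rather than combinatorial: all of the series and integrals converge only for $\alpha$ in a suitable left half-plane — the constraint coming from the behaviour near $x=1$, where $\Li_{\bfk}$ (or $\A(\bfk;\cdot)$), the power of the logarithm, and $(1-x)^{-\alpha}$ interact — so I would first establish the identities there and then extend them to all $\alpha\in\CC\setminus\N$ by analytic continuation, the excluded set $\N$ being precisely where the Hurwitz-type values have poles. The only other delicate points are the careful tracking of signs and orientation when the two involutions act on forms carrying the factor $s^{-\alpha}$, and the verification that reverse-and-swap of $w_{\bfk}$ together with the outer $\tfrac{dt}{t}$ reproduces the word of $(\ola{\bfk}^\vee)_+$; both reduce to the $k=0$ case, i.e. to the Hurwitz-deformed duality $\int_0^1\tfrac{\Li_{\bfk}(x)}{x(1-x)^\alpha}\,dx=\ze((\ola{\bfk}^\vee)_+;1-\alpha)$, which is straightforward to check directly.
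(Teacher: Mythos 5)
Your proposal is correct, but it reaches the binomial coefficients by a genuinely different mechanism than the paper. Both arguments pivot on the same $k=0$ identity $\int_0^1\Li_{\bfk}(x)x^{-1}(1-x)^{-\alpha}dx=\ze\bigl((\ola{\bfk}^\vee)_+;1-\alpha\bigr)$ (and its $T$-analogue), obtained from the iterated-integral representation and the involution $x\mapsto 1-x$ (resp.\ $x\mapsto\frac{1-x}{1+x}$), which realizes Hoffman duality. From there the paper simply differentiates this identity $k$ times with respect to the parameter $\alpha$: the left side acquires the factor $(-1)^k\log^k(1-x)$ for free, and on the right side the Leibniz rule applied to $\prod_i(n_i-\alpha)^{-m_i}$ produces exactly the coefficients $B\bigl((\ola{\bfk}^\vee)_+;\bfj\bigr)$, since $\frac{d^{j}}{d\alpha^{j}}(n-\alpha)^{-m}=\frac{(m+j-1)!}{(m-1)!}(n-\alpha)^{-m-j}$. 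You instead encode $\log^k(1-x)$ as the iterated integral $(-1)^kk!\int_0^x(\frac{dt}{1-t})^k$, form the shuffle product with the word of $\Li_{\bfk}$, transport everything by the involution, and obtain the same coefficients by counting interleavings of the $k$ extra copies of $\frac{dt}{t}$ within each block of the dual word; your count $\binom{m_i+j_i-1}{j_i}$ per block is correct, as is the observation that word reversal and letter swap commute with the shuffle. The paper's route is shorter and sidesteps all shuffle combinatorics, at the cost of differentiating under the integral sign; your route is purely word-combinatorial after the $k=0$ step and makes explicit the shuffle-algebraic structure that the authors themselves hint at in Section~\ref{Sec.KT.conj}. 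Your closing remarks on convergence and analytic continuation in $\alpha$ are apt (indeed the literal integral only converges for $\Re(\alpha)<1$, a point the paper passes over silently), and the one item you defer --- that reverse-and-swap of the word of $\Li_{\bfk_+}$ yields the word of $\ze\bigl((\ola{\bfk}^\vee)_+;1-\alpha\bigr)$ --- is precisely the step the paper carries out explicitly in \eqref{eq-MPL-IT-NOT-Log-HTMZVs}--\eqref{eq-KTA-IT-NOT-Log-HTMTVs}, so nothing essential is missing.
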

\begin{proof}
We first consider the two integrals
\[\int_0^1 \frac{\Li_{\bfk}(x)}{x(1-x)^\alpha}dx\quad \text{and}\quad \int_0^1 \frac{\A(\bfk,x)}{x(\frac{1-x}{1+x})^\alpha}dx.\]
Using the iterated integral expressions we get
\begin{align}
&\int_0^1 \frac{\Li_{\bfk}(x)}{x(1-x)^\alpha}dx
    =\int_0^1\left(\frac{dt}{1-t}\right)\left(\frac{dt}{t}\right)^{k_r-1}\cdots
        \left(\frac{dt}{1-t}\right)\left(\frac{dt}{t}\right)^{k_1-1}
        \frac{dt}{t(1-t)^\alpha}\label{eq-MPL-IT-NOT-Log},\\
&\int_0^1 \frac{\A(\bfk;x)}{x(\frac{1-x}{1+x})^\alpha}dx
    =\int_0^1\left(\frac{2dt}{1-t^2}\right)\left(\frac{dt}{t}\right)^{k_{r}-1}\cdots
        \left(\frac{2dt}{1-t^2}\right)\left(\frac{dt}{t}\right)^{k_1-1}
        \frac{dt}{t(\frac{1-t}{1+t})^\alpha}.\label{eq-KTA-IT-NOT-Log}
\end{align}
Applying the change of variable $t\rightarrow 1-t$ in \eqref{eq-MPL-IT-NOT-Log} and $t\rightarrow \frac{1-t}{1+t}$ in \eqref{eq-KTA-IT-NOT-Log}, and using the definition of Hoffman's dual with the help of \eqref{HTMZVs-Iterated-Integeral} and \eqref{HTMTVs-Iterated-Integeral}, we obtain
\begin{align}
&\int_0^1 \frac{\Li_{\bfk}(x)}{x(1-x)^\alpha}dx
    =\ze \left((1,\overleftarrow{\bfk})^\vee;1-\alpha\right)
    =\ze \left((\overleftarrow{\bfk}^\vee)_+;1-\alpha\right),
        \label{eq-MPL-IT-NOT-Log-HTMZVs}\\
&\int_0^1 \frac{\A(\bfk;x)}{x(\frac{1-x}{1+x})^\alpha}dx
    =T\left((1,\overleftarrow{\bfk})^\vee;1-\alpha\right)
    =T\left( (\overleftarrow{\bfk}^\vee)_+;1-\alpha\right).
        \label{eq-KTA-IT-NOT-Log-HTMTVs}
\end{align}
Hence, differentiating \eqref{eq-MPL-IT-NOT-Log-HTMZVs} and \eqref{eq-KTA-IT-NOT-Log-HTMTVs} $k$ times with respect to $\alpha$, we have
\begin{align*}
&\int_0^1 \frac{\Li_{\bfk}(x)\log^k(1-x)}{x(1-x)^\alpha}dx=(-1)^k\frac{d^k}{d\alpha^k}\ze \left( (\overleftarrow{\bfk}^\vee)_+;1-\alpha\right),\\
&\int_0^1 \frac{\A(\bfk;x)\log^k(\frac{1-x}{1+x})}{x(\frac{1-x}{1+x})^\alpha}dx=(-1)^k\frac{d^k}{d\alpha^k}T\left( (\overleftarrow{\bfk}^\vee)_+;1-\alpha\right).
\end{align*}
Finally, using the \emph{Leibniz rule}
\begin{align*}
\left(\prod_{j=1}^p f_j\right)^{(k)}=\sum\limits_{\substack{k_1+k_2+\cdots+k_p=k\\ k_1,k_2,\ldots,k_p\in\N_0}} \frac{k!}{k_1!k_2!\cdots k_p!} \prod\limits_{j=1}^p (f_j)^{(k_j)},
\end{align*}
we deduce the desired evaluations \eqref{EQ-MPL-LOG-A} and \eqref{EQ-MAL-LOG-A} by direct calculations.
\end{proof}

For example, we have
\begin{align*}
\int_0^1 {\frac{{\rm Li}_{2,2}(x)\log(1-x)}{x(1-x)^{\alpha}}dx}
 =-2\zeta(3,2,1;1-\alpha)-2\zeta(2,3,1;1-\alpha)-\zeta(2,2,2;1-\alpha).
\end{align*}

Setting $1-e^{-t}=x$ and $s=k+1\in \N$ in \eqref{a1} and \eqref{a2}, we get
\begin{align}
&\xi(k+1;\bfk)=\frac{(-1)^{k}}{k!}\int_{0}^1 \frac{\log^k(1-x){\mathrm{Li}}_{\bfk}(x)}{x}dx,\label{AKZCV}\\
&\eta(k+1;\bfk)=\frac{(-1)^{k-1}}{k!}\int_{0}^1 \frac{\log^k(1-x){\mathrm{Li}}_{\bfk}
    (\frac{x}{x-1})}{x}dx.\label{KTECV}
\end{align}
Similarly, putting $x=\tanh(t/2)$ and $s=k+1\in \N$ in \eqref{defn-formula-KTP}, we get
\begin{align}\label{KTPsiCV}
\psi(k+1;\bfk)=\frac{(-1)^k}{k!}\int_{0}^1 \frac{\log^k(\frac{1-x}{1+x}){\rm A}(\bfk;x)}{x}dx.
\end{align}

\begin{cor}\label{AKZ-KTPSI-COR} For any composition $\bfk:=(k_1,k_2,\ldots,k_r)$ and any $k\in \N_0$, we have
\begin{align}
&\xi(k+1;\bfk)=\sum_{|\bfj|=k,\dep(\bfj)=n} B\left((\overleftarrow{\bfk}^\vee)_+;\bfj\right)
    \ze\left((\overleftarrow{\bfk}^\vee)_++\bfj\right),\label{eq-ef-AKZF}\\
&\psi(k+1;\bfk)=\sum_{|\bfj|=k,\dep(\bfj)=n} B\left((\overleftarrow{\bfk}^\vee)_+;\bfj\right)
T\left((\overleftarrow{\bfk}^\vee)_++\bfj\right),
\end{align}
where $n:=|\bfk|+1-\dep(\bfk)$.
\end{cor}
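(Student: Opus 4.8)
The plan is to obtain this corollary as the single specialization $\alpha=0$ of Theorem~\ref{thm-MPL-MAL-Para}, combined with the integral representations \eqref{AKZCV} and \eqref{KTPsiCV} derived just above. Since $0\notin\N$, the value $\alpha=0$ is permitted in the theorem, and the resulting argument $1-\alpha=1$ converts each Hurwitz-type value into its ordinary counterpart via the observations $\ze(\bfk;1)=\ze(\bfk)$ and $T(\bfk;1)=T(\bfk)$ recorded right after \eqref{Defn-HTMTVs}. The whole proof is therefore a matter of aligning normalizations rather than of any new estimate.

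For $k\in\N$, I would set $\alpha=0$ in \eqref{EQ-MPL-LOG-A} to get
\[
\int_0^1\frac{\Li_{\bfk}(x)\log^k(1-x)}{x}\,dx
=(-1)^kk!\sum_{\substack{|\bfj|=k\\\dep(\bfj)=n}}
    B\left((\overleftarrow{\bfk}^\vee)_+;\bfj\right)
    \ze\left((\overleftarrow{\bfk}^\vee)_++\bfj\right).
\]
Multiplying both sides by $(-1)^k/k!$ and invoking \eqref{AKZCV} then yields \eqref{eq-ef-AKZF}, since the two factors of $(-1)^k$ cancel and $k!$ cancels the prefactor $1/k!$. The formula for $\psi$ is entirely parallel: setting $\alpha=0$ in \eqref{EQ-MAL-LOG-A}, using $T(\bfk;1)=T(\bfk)$, and comparing with \eqref{KTPsiCV} gives the second identity with the same sign and factorial bookkeeping.

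The only point requiring separate attention is the boundary case $k=0$, which is excluded from Theorem~\ref{thm-MPL-MAL-Para} (stated for $k\in\N$) but included in the corollary. Here I would instead specialize the intermediate identities \eqref{eq-MPL-IT-NOT-Log-HTMZVs} and \eqref{eq-KTA-IT-NOT-Log-HTMTVs} directly at $\alpha=0$, obtaining $\int_0^1\Li_{\bfk}(x)/x\,dx=\ze\!\left((\overleftarrow{\bfk}^\vee)_+\right)$ and its $T$-analogue. On the right-hand side the constraint $|\bfj|=0$ with $\dep(\bfj)=n$ forces $\bfj=(0,\dots,0)$, so $B\!\left((\overleftarrow{\bfk}^\vee)_+;\bfj\right)=1$ and the sum collapses to the single term $\ze\!\left((\overleftarrow{\bfk}^\vee)_+\right)$, which matches \eqref{AKZCV} with $\log^0(1-x)=1$. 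Thus I expect no genuine obstacle: the proof is a clean specialization, and the only things to verify carefully are the cancellation of the $(-1)^k$ and $k!$ factors and this degenerate $k=0$ count.
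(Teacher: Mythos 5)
Your proposal is correct and follows essentially the same route as the paper, which simply specializes Theorem~\ref{thm-MPL-MAL-Para} at $\alpha=0$ (the paper writes $\alpha\to 0$) and combines it with \eqref{AKZCV} and \eqref{KTPsiCV}. Your separate treatment of the boundary case $k=0$ via \eqref{eq-MPL-IT-NOT-Log-HTMZVs} and \eqref{eq-KTA-IT-NOT-Log-HTMTVs} is a small extra care the paper omits, but it does not change the nature of the argument.
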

\begin{proof}
The corollary follows immediately from Theorem \ref{thm-MPL-MAL-Para} if we let $\alpha\rightarrow 0$.
\end{proof}

\begin{thm} \label{conj:starZeta}
For any composition $\bfk:=(k_1,k_2,\ldots,k_r)$ and any $\alpha\in \mathbb{C}\backslash \N$, the following formula holds:
\begin{align}\label{eq-exfl-KTeta}
\int_0^1 \frac{\Li_{\bfk}(\frac{x}{x-1})}{x(1-x)^\alpha}dx=(-1)^r \ze^\star \left((\overleftarrow{\bfk}^\vee)_+;1-\alpha\right).
\end{align}
\end{thm}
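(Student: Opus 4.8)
The plan is to follow the proof of Theorem~\ref{thm-MPL-MAL-Para} almost verbatim, the only new ingredient being that the argument of $\Li_\bfk$ is now $\frac{x}{x-1}$ rather than $x$. First I would substitute the iterated integral \eqref{Eq-MPL-ItIn} for $\Li_\bfk$ evaluated at $\frac{x}{x-1}$ and apply the change of variables $t\mapsto\frac{t}{t-1}$ to the inner variables (this is precisely the involution linking $\xi$ to $\eta$, compare \eqref{AKZCV} and \eqref{KTECV}). A direct computation of the pullbacks of the two generating one-forms gives
\begin{align*}
\frac{dt}{t}\ \longmapsto\ \frac{dt}{t}+\frac{dt}{1-t}=\frac{dt}{t(1-t)},\qquad \frac{dt}{1-t}\ \longmapsto\ -\frac{dt}{1-t},
\end{align*}
while the endpoints $0$ and $\frac{x}{x-1}$ are carried to $0$ and $x$, with the orientation matched so that the simplex stays positively ordered.

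Since each of the $r$ blocks $\bigl(\frac{dt}{1-t}\bigr)\bigl(\frac{dt}{t}\bigr)^{k_i-1}$ in \eqref{Eq-MPL-ItIn} contains exactly one factor $\frac{dt}{1-t}$, the $r$ resulting minus signs combine into a global $(-1)^r$, and every remaining $\frac{dt}{t}$ is replaced by $\frac{dt}{t}+\frac{dt}{1-t}$. Appending the outer form $\frac{dt}{t(1-t)^\alpha}$ as the largest variable (exactly as in \eqref{eq-MPL-IT-NOT-Log}) then packages the whole integral as
\begin{align*}
\int_0^1\frac{\Li_\bfk(\frac{x}{x-1})}{x(1-x)^\alpha}dx
=(-1)^r\int_0^1\Bigl(\frac{dt}{1-t}\Bigr)\Bigl(\frac{dt}{t}+\frac{dt}{1-t}\Bigr)^{k_r-1}\!\!\cdots\Bigl(\frac{dt}{1-t}\Bigr)\Bigl(\frac{dt}{t}+\frac{dt}{1-t}\Bigr)^{k_1-1}\frac{dt}{t(1-t)^\alpha}.
\end{align*}

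The heart of the argument is to evaluate this as a single star value. I would expand each factor $\bigl(\frac{dt}{t}+\frac{dt}{1-t}\bigr)^{k_i-1}$, turning the integral into a finite sum of iterated integrals, each of which is again of the shape in \eqref{eq-MPL-IT-NOT-Log}, namely the $\Li$-integral of some composition followed by $\frac{dt}{t(1-t)^\alpha}$. Reading off the parts by splitting each word at its factors $\frac{dt}{1-t}$, and applying the already-proven no-logarithm identity \eqref{eq-MPL-IT-NOT-Log-HTMZVs} to every summand, yields
\begin{align*}
\int_0^1\frac{\Li_\bfk(\frac{x}{x-1})}{x(1-x)^\alpha}dx=(-1)^r\sum_{\bfa}\zeta\bigl((\bfa^\vee)_+;1-\alpha\bigr),
\end{align*}
where $\bfa$ runs over all \emph{refinements} of $\overleftarrow{\bfk}$ (each block of weight $k_i$ independently producing every composition of $k_i$).

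It then remains to collapse this sum into $\zeta^\star\bigl((\overleftarrow{\bfk}^\vee)_+;1-\alpha\bigr)$, and this combinatorial step is where I expect the real work to lie. The mechanism is that Hoffman duality interchanges refinement and coarsening: turning a ``$+$'' into a ``$,$'' on one side corresponds to turning a ``$,$'' into a ``$+$'' on the other, so $\bfa$ refines $\overleftarrow{\bfk}$ if and only if $\bfa^\vee$ coarsens $\overleftarrow{\bfk}^\vee$. Re-indexing by $\bfm=\bfa^\vee$, and noting that adjoining $(\cdot)_+$ commutes with merging parts (a merge never deletes the first part), the sum becomes $\sum_{\bfn}\zeta(\bfn;1-\alpha)$ over all coarsenings $\bfn$ of $(\overleftarrow{\bfk}^\vee)_+$. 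Finally, the defining relation of the Hurwitz star values, namely $\zeta^\star(\bfl;\beta)=\sum_{\bfl'}\zeta(\bfl';\beta)$ summed over all coarsenings $\bfl'$ of $\bfl$ (valid for every shift $\beta$, since it is merely the passage from $\ge$ to $>$ in the nested summation), identifies the right-hand side with $(-1)^r\zeta^\star\bigl((\overleftarrow{\bfk}^\vee)_+;1-\alpha\bigr)$, which is \eqref{eq-exfl-KTeta}. The only delicate point throughout is the simultaneous bookkeeping of the reversal $\overleftarrow{(\cdot)}$, the dual $\vee$, and the shift $(\cdot)_+$; the analytic issues (convergence for $\Re(\alpha)$ small and continuation to $\alpha\in\CC\setminus\N$) are identical to those already handled in Theorem~\ref{thm-MPL-MAL-Para}.
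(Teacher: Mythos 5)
Your proposal is correct and follows essentially the same route as the paper: the pullback computation under $t\mapsto t/(t-1)$ (giving $\ta\mapsto\ta+\tb$, $\tb\mapsto-\tb$ and hence the global $(-1)^r$ and the sum $\sum_{\bfl\succeq\bfk}\Li_{\bfl}(x)$ over refinements), followed by the $k=0$ case of Theorem~\ref{thm-MPL-MAL-Para} applied termwise, and the observation that Hoffman duality together with $(\cdot)_+$ turns the sum over refinements into the sum over coarsenings defining $\ze^\star$. The bookkeeping of reversal, $\vee$, and $(\cdot)_+$ that you flag as the delicate point is handled in the paper exactly as you describe.
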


\begin{proof}
Let $\ta=d\log t$ and $\tb=-d\log(1-t)$. Then by the change of variable $t\to t/(t-1)$ we get
\begin{align*}
\Li_{\bfk} \left(\frac{x}{x-1} \right)=&\, \int_0^{\tfrac{x}{x-1}} \tb \ta^{k_r-1} \cdots  \tb \ta^{k_1-1} \\
=&\, (-1)^r \int_0^x \tb (\ta+\tb)^{k_r-1} \cdots  \tb (\ta+\tb)^{k_1-1}=(-1)^r \sum_{\bfl\succeq \bfk} \Li_{\bfl}(x),
\end{align*}
where $\bfl\succeq \bfk$ means that $\bfl$ runs through all compositions such that $\bfk$ can be obtained
from $\bfl$ by combining some of the consecutive parts of $\bfl$.  By \eqref{EQ-MPL-LOG-A} in
Theorem \ref{thm-MPL-MAL-Para} we have
\begin{equation*}
\int_0^1 \frac{\Li_{\bfk}(\frac{x}{x-1})}{x(1-x)^\alpha}dx
=(-1)^r \sum_{\bfl\succeq \bfk}
\ze\left((\overleftarrow{\bfl}^\vee)_+;1-\alpha\right).
\end{equation*}
Since taking duality and then adding 1 to the first components reverse the inclusion relations, we obtain
\begin{equation*}
 \sum_{\bfl\succeq \bfk}
\ze\left((\overleftarrow{\bfl}^\vee)_+;1-\alpha\right)
 =\sum_{\bfl'\preceq (\overleftarrow{\bfk}^\vee)_+} \ze \left( \bfl';1-\alpha\right)
=\ze^\star \left((\overleftarrow{\bfk}^\vee)_+;1-\alpha\right).
\end{equation*}
This completes the proof of the theorem.
\end{proof}

\begin{cor}\label{KTeta-COR} For any composition $\bfk:=(k_1,k_2,\ldots,k_r)$ and any $k\in \N_0$, we have
\begin{align}\label{eq-KTEF-exfor}
\eta(k+1;\bfk)&=(-1)^{r-1} \sum_{|\bfj|=k,\dep(\bfj)=n} B
\left((\overleftarrow{\bfk}^\vee)_+;\bfj\right)\ze^\star
\left((\overleftarrow{\bfk}^\vee)_++\bfj\right),
\end{align}
where $n:=|\bfk|+1-\dep(\bfk)$.
\end{cor}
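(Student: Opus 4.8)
The plan is to obtain Corollary \ref{KTeta-COR} from Theorem \ref{conj:starZeta} by exactly the differentiate-in-$\alpha$ mechanism that turned the $\log$-free identities \eqref{eq-MPL-IT-NOT-Log-HTMZVs}--\eqref{eq-KTA-IT-NOT-Log-HTMTVs} into the $\log^k$-weighted statements of Theorem \ref{thm-MPL-MAL-Para}. The starting point is the integral representation \eqref{KTECV}, which writes $\eta(k+1;\bfk)=\frac{(-1)^{k-1}}{k!}\int_0^1 \frac{\log^k(1-x)\Li_{\bfk}(\frac{x}{x-1})}{x}\,dx$; the factor $\log^k(1-x)$ is precisely what a $k$-fold differentiation of $(1-x)^{-\alpha}$ supplies. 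Since Theorem \ref{conj:starZeta} already provides the $\log$-free evaluation \eqref{eq-exfl-KTeta}, valid for all $\alpha\in\CC\setminus\N$ with both sides analytic in $\alpha$, I can differentiate that identity freely and then send $\alpha\to 0$.

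Concretely, I would apply $\frac{d^k}{d\alpha^k}$ to both sides of \eqref{eq-exfl-KTeta}. On the left, $\frac{d^k}{d\alpha^k}(1-x)^{-\alpha}=(-1)^k\log^k(1-x)(1-x)^{-\alpha}$, so the left side becomes $(-1)^k\int_0^1 \frac{\log^k(1-x)\Li_{\bfk}(\frac{x}{x-1})}{x(1-x)^\alpha}\,dx$. On the right, writing $(\overleftarrow{\bfk}^\vee)_+=(a_1,\dots,a_n)$ and differentiating the defining series $\ze^\star((\overleftarrow{\bfk}^\vee)_+;1-\alpha)=\sum_{n_1\geq\cdots\geq n_n>0}\prod_{i=1}^n (n_i-\alpha)^{-a_i}$ term by term, the factor carrying $j_i$ derivatives contributes $\frac{d^{j_i}}{d\alpha^{j_i}}(n_i-\alpha)^{-a_i}=j_i!\binom{a_i+j_i-1}{j_i}(n_i-\alpha)^{-a_i-j_i}$, so the \emph{Leibniz rule} cancels the $j_i!$ against the multinomial coefficient and yields $k!\sum_{|\bfj|=k,\,\dep(\bfj)=n}B\big((\overleftarrow{\bfk}^\vee)_+;\bfj\big)\,\ze^\star\big((\overleftarrow{\bfk}^\vee)_++\bfj;1-\alpha\big)$. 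Here one uses the Hoffman-duality depth count $\dep\big((\overleftarrow{\bfk}^\vee)_+\big)=n=|\bfk|+1-\dep(\bfk)$, which fixes the range $\dep(\bfj)=n$ of the summation.

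It then remains to let $\alpha\to 0$ and substitute into \eqref{KTECV}. Equating the two differentiated sides gives $\int_0^1 \frac{\log^k(1-x)\Li_{\bfk}(\frac{x}{x-1})}{x}\,dx=(-1)^{r+k}k!\sum_{|\bfj|=k,\,\dep(\bfj)=n}B\big((\overleftarrow{\bfk}^\vee)_+;\bfj\big)\,\ze^\star\big((\overleftarrow{\bfk}^\vee)_++\bfj\big)$, where $r=\dep(\bfk)$; inserting this into $\eta(k+1;\bfk)=\frac{(-1)^{k-1}}{k!}\int\cdots$ collapses the prefactor to $(-1)^{k-1}(-1)^{r+k}=(-1)^{2k+r-1}=(-1)^{r-1}$, which is exactly \eqref{eq-KTEF-exfor}.

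The genuinely new input is Theorem \ref{conj:starZeta} itself; the corollary is a formal consequence, so the only points needing care are analytic rather than conceptual. The main obstacle I anticipate is justifying the interchange of $\frac{d^k}{d\alpha^k}$ with both the integral over $x$ and the sum defining $\ze^\star$: one must check that the $\log^k$-weighted integrand stays integrable near $x=1$ and that the differentiated star series converges locally uniformly in $\alpha$ near $0$. Both follow from the admissibility of $(\overleftarrow{\bfk}^\vee)_+$ (its first entry is $\geq 2$ after the $(\cdot)_+$ operation), together with the same dominated-convergence estimates already used implicitly in the proof of Theorem \ref{thm-MPL-MAL-Para}, so I would invoke that analysis rather than repeat it.
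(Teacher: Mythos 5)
Your proposal is correct and follows essentially the same route as the paper: the paper likewise derives the corollary by applying $\frac{d^k}{d\alpha^k}$ to the identity \eqref{eq-exfl-KTeta} of Theorem \ref{conj:starZeta}, expanding the derivative of $\ze^\star\left((\overleftarrow{\bfk}^\vee)_+;1-\alpha\right)$ via the Leibniz rule to produce the $B$-weighted sum, and then letting $\alpha\to 0$ in combination with \eqref{KTECV}. Your sign bookkeeping $(-1)^{k-1}(-1)^{r+k}=(-1)^{r-1}$ matches the paper's conclusion.
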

\begin{proof}
Applying \eqref{KTECV} gives
\[\eta(k+1;\bfk)=-\frac{1}{k!}\lim_{\alpha\rightarrow 0}\frac{d^k}{d\alpha^k}\int_0^1 \frac{\Li_{\bfk}(\frac{x}{x-1})}{x(1-x)^\alpha}dx.\]
Then, using \eqref{eq-exfl-KTeta} we deduce
\begin{align*}
\eta(k+1;\bfk)&=\frac{(-1)^{r-1}}{k!}\lim_{\alpha\rightarrow 0} \frac{d^k}{d\alpha^k}\ze^\star \left((\overleftarrow{\bfk}^\vee)_+;1-\alpha\right)\\
&=(-1)^{r-1} \lim_{\alpha\rightarrow 0}\sum_{|\bfj|=k,\dep(\bfj)=n}
B\left((\overleftarrow{\bfk}^\vee)_+;\bfj\right)
\ze^\star\left((\overleftarrow{\bfk}^\vee)_++\bfj;1-\alpha\right)\\
&=(-1)^{r-1} \sum_{|\bfj|=k,\dep(\bfj)=n}
B\left((\overleftarrow{\bfk}^\vee)_+;\bfj\right)
\ze^\star\left((\overleftarrow{\bfk}^\vee)_++\bfj\right).
\end{align*}
 This concludes the proof of the corollary.
\end{proof}

We mention that \eqref{eq-ef-AKZF} and \eqref{eq-KTEF-exfor} were also proved in \cite{KawasakiOh2018}.

\section{A formula related to Kaneko--Tsumura's conjecture}\label{Sec.KT.conj}

Let $\mathcal{Z}$ be the space of usual multiple zeta values. In \cite[Conjecture 5.3]{KanekoTs2019}, the first-named author and Tsumura observed that the following holds:
\begin{equation}\label{KT.conj}
\sum_{\substack{i+j=m\\i,j\geq 0}}
    \binom{p+i-1}{i}\binom{q+j-1}{j}T(p+i,q+j)\in\mathcal{Z}\,,
\end{equation}
for $m,q\geq 1$ and $p\geq 2$, with $m+p+q$ even. That is, these sums are expressible in terms of MZVs. In 2021, Murakami \cite[Theorem 42]{Mura21} proved this conjecture by using the motivic method employed in \cite{Gla18} (see also \cite[Remark 5.6]{KanekoTs2019}).

In this section, we give a formula which involves a kind of symmetrized quantity of the left-hand side of~\eqref{KT.conj}.  The formula holds true for MZVs as well in the exact same form.  This suggests that there should be an algebraic proof using the shuffle product, but we give here our original analytic
proof using Theorem~\ref{thm-MPL-MAL-Para}.

\begin{thm}\label{thm-sym-for-KTCon} For any positive integers $q$ and $p,m\geq 2$, we have
\begin{align}
&\sum_{\substack{i+j=m-1\\i,j\geq 0}}
    \binom{p+i-1}{i}\binom{q+j-1}{j}\ze(p+i,q+j)\nonumber\\
&\quad-(-1)^q\sum_{\substack{i+j=p-1\\i,j\geq 0}}
    \binom{m+i-1}{i}\binom{q+j-1}{j}\ze(m+i,q+j)\nonumber\\
&\quad=\sum_{\substack{i+j=q-1\\i,j\geq 0}} (-1)^j
    \binom{m+i-1}{i}\binom{p+j-1}{j}\ze(m+i)\ze(p+j),\label{eq-sym-for-MZVCon}\\
&\sum_{\substack{i+j=m-1\\i,j\geq 0}}
    \binom{p+i-1}{i}\binom{q+j-1}{j}T(p+i,q+j)\nonumber\\
&\quad-(-1)^q\sum_{\substack{i+j=p-1\\i,j\geq 0}}
    \binom{m+i-1}{i}\binom{q+j-1}{j}T(m+i,q+j)\nonumber\\
&\quad=\sum_{\substack{i+j=q-1\\i,j\geq 0}} (-1)^j
    \binom{m+i-1}{i}\binom{p+j-1}{j}T(m+i)T(p+j).\label{eq-sym-for-KTCon}
\end{align}
\end{thm}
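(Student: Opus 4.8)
The plan is to derive both identities from Theorem~\ref{thm-MPL-MAL-Para} by computing a single integral in two different ways. Focus first on the multiple-zeta case~\eqref{eq-sym-for-MZVCon}; the $T$-value case~\eqref{eq-sym-for-KTCon} should follow by an entirely parallel argument, replacing $\Li$ by $\A$, the measure $dt/(1-t)$ by $2dt/(1-t^2)$, and the change of variable $t\mapsto 1-t$ by $t\mapsto (1-t)/(1+t)$. The starting point is the depth-two, weight-$(p,q)$ case of Theorem~\ref{thm-MPL-MAL-Para}, applied to the composition $\bfk=(q-1,p-1)$ or a suitable relative, with the exponent $k=m-1$ on the logarithm. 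Since $(\overleftarrow{\bfk}^\vee)_+$ for a depth-two composition unwinds into a block of consecutive integers, the binomial coefficient $B((\overleftarrow{\bfk}^\vee)_+;\bfj)$ and the sum over $|\bfj|=k$ will produce exactly the binomial-weighted sums $\sum\binom{p+i-1}{i}\binom{q+j-1}{j}\ze(p+i,q+j)$ appearing on the left-hand side. Thus one side of~\eqref{eq-sym-for-MZVCon} is a direct instantiation of~\eqref{EQ-MPL-LOG-A}.

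Next I would introduce the symmetry. The key structural observation is that the integrand $\Li_{\bfk}(x)\log^{m-1}(1-x)/x$ is built from the two one-forms $\omega_0=dt/t$ and $\omega_1=dt/(1-t)$, and that there is an integral-level symmetry exchanging the roles of the three parameters $m$, $p$, $q$. Concretely, I expect to compute $\int_0^1 \Li_{\bfk}(x)\log^{m-1}(1-x)\,dx/x$ (or the appropriate variant) by integration by parts, or by the substitution $t\mapsto 1-t$ that reverses the iterated integral and converts the $\log^{m-1}(1-x)$ factor into a $\log^{m-1}x$ factor. Reversing the word in the iterated integral interchanges the block of length $p$ coming from $\Li$ with the block of length $m$ coming from the logarithmic factor, which is precisely the source of the second term on the left, $(-1)^q\sum\binom{m+i-1}{i}\binom{q+j-1}{j}\ze(m+i,q+j)$; the sign $(-1)^q$ should emerge from the number of $\omega_1$-forms that get transposed past each other, together with the reflection $\Li_{\bfk}(x)\leftrightarrow$ its companion under $t\mapsto 1-t$. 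The right-hand side, a product $\ze(m+i)\ze(p+j)$ of single zeta values, is the boundary/stuffle contribution: it records the terms in which the two iterated-integral blocks fully decouple, which is exactly what one sees after expanding the shuffle or after the integration-by-parts boundary terms are collected.

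The main obstacle I anticipate is bookkeeping the decomposition cleanly rather than proving any one hard fact. The difficulty is to show that the difference of the two depth-two binomial sums telescopes down to the single-variable product sum with the precise signs $(-1)^j$ and alternating structure on the right. I would handle this by working entirely at the level of iterated integrals over the one-forms $\omega_0,\omega_1$: write the product of the two ``halves'' of the integrand as a shuffle, so that $\Li_{p}(x)\cdot(\text{the }\log^{m-1}\text{ block})$ decomposes into a sum of iterated integrals, each of which is a Hurwitz-type double zeta value by~\eqref{HTMZVs-Iterated-Integeral}; then let $\alpha\to 0$ as in Corollaries~\ref{AKZ-KTPSI-COR} and~\ref{KTeta-COR}. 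Grouping the shuffle terms by whether the two blocks interleave (giving genuine double zetas, the two left-hand sums) or concatenate trivially (giving products of single zetas, the right-hand side) produces the stated identity. The same shuffle/stuffle split, carried out with the forms $2\,dt/(1-t^2)$ and $dt/t$ and invoking~\eqref{HTMTVs-Iterated-Integeral} and~\eqref{EQ-MAL-LOG-A} in place of their $\ze$-analogues, yields~\eqref{eq-sym-for-KTCon} verbatim, which is why the $T$-value identity has the identical shape. The one point demanding care is confirming that the parity hypothesis and the reflection $t\mapsto 1-t$ (resp.\ $t\mapsto(1-t)/(1+t)$) really produce the uniform sign $(-1)^q$ and $(-1)^j$; I would verify this by tracking the exact count of $\omega_1$-forms (resp.\ $2\,dt/(1-t^2)$-forms) reversed in the word, rather than by any case analysis on $m,p,q$.
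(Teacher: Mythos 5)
Your opening move matches the paper's: specialize \eqref{EQ-MPL-LOG-A}/\eqref{EQ-MAL-LOG-A} to a height-one composition chosen so that $(\overleftarrow{\bfk}^\vee)_+=(p,q)$ --- concretely $\bfk=(q,p-1)^\vee=(\{1\}_{q-1},2,\{1\}_{p-2})$, not $(q-1,p-1)$ --- with $k=m-1$ and $\alpha=0$. This turns the first binomial sum into $\tfrac{(-1)^{m-1}}{(m-1)!}\int_0^1\A(\{1\}_{q-1},2,\{1\}_{p-2};x)\log^{m-1}(\tfrac{1-x}{1+x})\tfrac{dx}{x}$ (times $2$ in the $T$-case), after which one applies the reflection $x\mapsto\tfrac{1-x}{1+x}$ (resp.\ $x\mapsto 1-x$). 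Up to that point you are on the paper's track.

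The gap is in what happens after the reflection. The paper's proof hinges on a specific closed-form transformation (quoted from Pallewatta--Xu, Thm.~2.9) expressing $\A(\{1\}_{q-1},2,\{1\}_{p-2};\tfrac{1-t}{1+t})$ as
$(-1)^{q-1}\sum_{j}\tfrac{\binom{p+j-1}{j}}{(q-1-j)!}\,T(p+j)\log^{q-1-j}(t)
+(-1)^{p+q-1}\sum_{j}(-1)^j\tfrac{\binom{q+j-1}{j}}{(p-1-j)!}\log^{p-1-j}(t)\,\A(q+j;t)$;
integrating this against $\log^{m-1}(t)\,\tfrac{2\,dt}{1-t^2}$ term by term yields exactly the product terms on the right-hand side and the second double-$T$ sum, with the signs $(-1)^j$ and $(-1)^q$ coming from the prefactors of that expansion rather than from counting transposed one-forms. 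Your sketch replaces this with ``write the product of the two halves as a shuffle and group terms by whether the blocks interleave or concatenate,'' but that cannot work as stated: shuffling the word for $\Li_{(\{1\}_{q-1},2,\{1\}_{p-2})}$, which contains $p+q-2$ copies of $dt/(1-t)$, with the $m-1$ copies of $dt/(1-t)$ coming from the logarithm, and capping with $dt/t$, produces iterated integrals of depth $m+p+q-3$, not the depth-$\le 2$ values and depth-one products appearing in the identity. The decisive ingredient --- the Landen-type expansion of the height-one function at the reflected argument (or an equivalent precise regularized-shuffle computation that collapses the depth) --- is therefore missing, and without it the claimed grouping into ``the two left-hand sums'' and ``the right-hand side'' is not justified. (The paper itself remarks that a purely shuffle-algebraic proof should exist, but supplying one requires exactly the kind of explicit regularization bookkeeping that your outline leaves open.)
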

\begin{proof}
We first prove \eqref{eq-sym-for-KTCon}. Letting $(k_1,\ldots,k_r)=(m_p,\ldots,m_2,m_1-1)^\vee$ in \eqref{EQ-MAL-LOG-A} yields
\begin{align}\label{EQ-KTA-LOG-A-change}
&\int_0^1 \frac{\A((m_p,\ldots,m_2,m_1-1)^\vee;x)\log^k(\frac{1-x}{1+x})}{x(\frac{1-x}{1+x})^\alpha}dx\nonumber\\
&=(-1)^kk!\sum_{\substack{i_1+i_2+\cdots+i_p=k\\ i_j\geq 0,\ \forall j}}
\left\{\prod\limits_{j=1}^p\binom{m_j+i_j-1}{i_j}\right\}T(m_1+i_1,m_2+i_2,\ldots,m_p+i_p;1-\alpha).
\end{align}
Setting $p=2,\alpha=0$ in \eqref{EQ-KTA-LOG-A-change}, then replacing $(m_1,m_2,k)$ by $(p,q,m-1)$ and noting that $(q,p-1)^\vee=(\{1\}_{q-1},2,\{1\}_{p-2})$ ($\{1\}_r$ is the sequence of $1$'s with $r$ repetitions), we obtain
\begin{align}\label{KT-conj-A-int}
&\sum_{\substack{i+j=m-1\\i,j\geq 0}}
    \binom{p+i-1}{i}\binom{q+j-1}{j}T(p+i,q+j)\nonumber\\
&\quad=2\frac{(-1)^{m-1}}{(m-1)!} \int_0^1 \frac{\A(\{1\}_{q-1},2,\{1\}_{p-2};x)\log^{m-1}(\frac{1-x}{1+x})}{x}dx\nonumber\\
&\quad=2\frac{(-1)^{m-1}}{(m-1)!} \int_0^1 \frac{\A(\{1\}_{q-1},2,\{1\}_{p-2};\frac{1-t}{1+t})\log^{m-1}(t)}{1-t^2}dt,
\end{align}
where we have used the change of variable $x=\frac{1-t}{1+t}$ in the last step. From \cite[Thm 2.9]{PX2019}, we get
\begin{align}\label{KTA-change}
\A\left(\{1\}_{q-1},2,\{1\}_{p-2};\frac{1-t}{1+t}\right)&=(-1)^{q-1}\sum_{j=0}^{q-1}\frac{\binom{p+j-1}{j}}{(q-1-j)!}T(p+j)\log^{q-1-j}(t)\nonumber\\
&\quad+(-1)^{p+q-1}\sum_{j=0}^{p-1} (-1)^j \frac{\binom{q+j-1}{j}}{(p-1-j)!}\log^{p-1-j}(t)\A(q+j;t).
\end{align}
Hence, substituting \eqref{KTA-change} into \eqref{KT-conj-A-int} gives
\begin{align}\label{KT-conj-A-int-change}
&\sum_{\substack{i+j=m-1\\i,j\geq 0}}
    \binom{p+i-1}{i}\binom{q+j-1}{j}T(p+i,q+j)\nonumber\\
&\quad=2\frac{(-1)^{m+q}}{(m-1)!}\sum_{j=0}^{q-1}\frac{\binom{p+j-1}{j}}{(q-1-j)!}T(p+j)\int_0^1\frac{\log^{m+q-2-j}(t)}{1-t^2}dt\nonumber\\
&\quad\quad+2\frac{(-1)^{m+p+q}}{(m-1)!}\sum_{j=0}^{p-1} (-1)^j \frac{\binom{q+j-1}{j}}{(p-1-j)!}\int_0^1\frac{\log^{m+p-2-j}(t)\A(q+j;t)}{1-t^2}dt\nonumber\\
&\quad=\sum_{j=0}^{q-1} (-1)^j \binom{p+j-1}{j} \binom{m+q-2-j}{m-1}T(p+j)T(m+q-1-j)\nonumber\\
&\quad\quad+(-1)^q \sum_{j=0}^{p-1} \binom{q+j-1}{j}\binom{m+p-2-j}{m-1}T(m+p-1-j,q+j).
\end{align}
Then, a simple calculation yields \eqref{eq-sym-for-KTCon} easily.  The proof of \eqref{eq-sym-for-MZVCon} is similar to the above so we leave it to the interested reader.
\end{proof}

Moreover, letting $m=p$ in Theorem \ref{thm-sym-for-KTCon}, we can get the following corollary.

\begin{cor} For positive integers $q$ and $p\geq 2$, we have
\begin{align}
&(1-(-1)^q)\sum_{\substack{i+j=p-1\\i,j\geq 0}}
    \binom{p+i-1}{i}\binom{q+j-1}{j}\ze(p+i,q+j)\nonumber\\
&\quad=\sum_{\substack{i+j=q-1\\i,j\geq 0}} (-1)^j
    \binom{p+i-1}{i}\binom{p+j-1}{j}\ze(p+i)\ze(p+j),\label{eq-sym-for-KTCon-cor1}\\
&(1-(-1)^q)\sum_{\substack{i+j=p-1\\i,j\geq 0}}
    \binom{p+i-1}{i}\binom{q+j-1}{j}T(p+i,q+j)\nonumber\\
&\quad=\sum_{\substack{i+j=q-1\\i,j\geq 0}} (-1)^j
    \binom{p+i-1}{i}\binom{p+j-1}{j}T(p+i)T(p+j).\label{eq-sym-for-KTCon-cor2}
\end{align}
In particular, when $q$ is even, the two summations on the right side of (\ref{eq-sym-for-KTCon-cor1}) and (\ref{eq-sym-for-KTCon-cor2}) vanish.
\end{cor}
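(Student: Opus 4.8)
The plan is to derive both identities purely by the specialization $m=p$ in Theorem~\ref{thm-sym-for-KTCon}, so that no new integral or analytic input is required. First I would set $m=p$ in \eqref{eq-sym-for-MZVCon}. The first sum on the left is then indexed by $i+j=p-1$ with summand $\binom{p+i-1}{i}\binom{q+j-1}{j}\ze(p+i,q+j)$, while the second sum, originally indexed by $i+j=p-1$ with summand $\binom{m+i-1}{i}\binom{q+j-1}{j}\ze(m+i,q+j)$ and prefactor $-(-1)^q$, becomes literally the same object once $m=p$. The key observation is therefore that after the substitution the two left-hand sums coincide term by term, so the whole left side collapses to $(1-(-1)^q)\sum_{i+j=p-1}\binom{p+i-1}{i}\binom{q+j-1}{j}\ze(p+i,q+j)$, which is exactly the left side of \eqref{eq-sym-for-KTCon-cor1}.

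Next I would check the right side: substituting $m=p$ into the right-hand side of \eqref{eq-sym-for-MZVCon} turns $\binom{m+i-1}{i}$ into $\binom{p+i-1}{i}$ and $\ze(m+i)$ into $\ze(p+i)$, producing precisely the right-hand side of \eqref{eq-sym-for-KTCon-cor1}. Repeating the identical bookkeeping on \eqref{eq-sym-for-KTCon}, with $T$ in place of $\ze$ throughout, yields \eqref{eq-sym-for-KTCon-cor2}. For the final assertion I would argue that when $q$ is even the prefactor $1-(-1)^q$ vanishes, so the left sides of \eqref{eq-sym-for-KTCon-cor1} and \eqref{eq-sym-for-KTCon-cor2} are $0$; by the equalities just established, the right-hand sums must vanish as well. (Alternatively, one can see this directly: the right-hand summand is symmetric in the two indices apart from the sign $(-1)^j$, and since $i+j=q-1$ is odd when $q$ is even, swapping $i\leftrightarrow j$ multiplies each term by $(-1)^{i-j}=-1$, so the sum equals its own negative.)

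Because the entire argument is a specialization, I do not anticipate a genuine obstacle. The only point needing care is the explicit verification that the two left-hand sums in Theorem~\ref{thm-sym-for-KTCon} truly coincide after setting $m=p$ — that their index sets $i+j=m-1$ and $i+j=p-1$ agree, and that the binomial coefficients and the $\ze$- (resp.\ $T$-) arguments match entry for entry — which is immediate but worth stating explicitly so that the collapse to the single factor $1-(-1)^q$ is transparent.
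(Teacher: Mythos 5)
Your proposal is correct and is exactly the paper's route: the corollary is obtained by the specialization $m=p$ in Theorem~\ref{thm-sym-for-KTCon}, under which the two left-hand sums coincide and combine into the factor $1-(-1)^q$, and the right-hand side specializes directly. Your additional direct symmetry argument for the vanishing when $q$ is even (pairing $i\leftrightarrow j$ over the odd sum $i+j=q-1$) is a nice bonus the paper does not spell out, but the core argument is the same.
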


\medskip\noindent
{\bf Acknowledgments}

Masanobu Kaneko is supported by the JSPS KAKENHI Grant Numbers JP16H06336, JP21H04430. Weiping Wang is supported by the Zhejiang Provincial Natural Science Foundation of China (Grant No. LY22A010018). The corresponding author Ce Xu is supported by the National Natural Science Foundation of China (Grant No. 12101008), the Natural Science Foundation of Anhui Province (Grant No. 2108085QA01) and the University Natural Science Research Project of Anhui Province (Grant No. KJ2020A0057). Jianqiang Zhao is supported by the Jacobs Prize from The Bishop's School.

\medskip\noindent
{\bf Data availability statement} There is no data associated with the paper.

\medskip\noindent
{\bf Declarations of competing interest} none.

\end{document}